
\documentclass[10pt]{amsart}

\usepackage{amsmath}

\usepackage{amssymb}

\usepackage{graphicx}



\newtheorem{thm}{Theorem}
\newtheorem{prop}[thm]{Proposition}
\newtheorem{lemma}[thm]{Lemma}
\newtheorem{cor}[thm]{Corollary}

\theoremstyle{definition}
\newtheorem{defn}{Definition}

\theoremstyle{remark}
\newtheorem{remark}{Remark}



\def\C{\mathbb{C}}
\def\R{\mathbb{R}}

\def\Z{\mathbb{Z}}

\def\a{\mathfrak{a}}

\def\sl{\mathfrak{sl}}

\def\SL{\mathrm{SL}}

\def\he{\mathrm{Herm}}

\def\det{\mathrm{det\,}}
\def\dim{\mathrm{dim\,}}

\def\span{\mathrm{span\,}}


\begin{document}

\title[Reduction for variational
 problems on null curves]{Reduction for constrained
variational \\ problems on 3D null curves}

\author{Emilio Musso}
\address{(E. Musso) Dipartimento di Matematica Pura ed Applicata,
Universit\`a degli Studi dell'Aquila, Via Vetoio, I-67010 Coppito
(L'Aquila), Italy} \email{musso@univaq.it}

\author{Lorenzo Nicolodi}
\address{(L. Nicolodi) Di\-par\-ti\-men\-to di Ma\-te\-ma\-ti\-ca,
Uni\-ver\-si\-t\`a degli Studi di Parma, Viale G. P. Usberti 53/A,
I-43100 Parma, Italy} \email{lorenzo.nicolodi@unipr.it}

\thanks{Authors partially supported by MIUR projects:
\textit{Metriche riemanniane e variet\`a differenziali} (E.M.);
\textit{Propriet\`a geometriche delle variet\`a reali e complesse}
(L.N.); and by the GNSAGA of INDAM}

\subjclass[2000]{49F05; 58E10; 58A17}



\keywords{Null curves, invariant variational problems, extremal
trajectories, optimal control systems, moving frames, Lax
formulation, Marsden--Weinstein reduction.}

\begin{abstract}
We consider the optimal control problem for null curves in de Sitter
3-space defined by a functional which is linear in the curvature of
the trajectory. We show how techniques ba\-sed on the method of
moving frames and exterior differential systems, coupled with the
reduction procedure for systems with a Lie group of symmetries lead
to the integration by quadratures of the extremals. Explicit
solutions are found in terms of elliptic functions and integrals.
\end{abstract}

\maketitle

\section{Introduction}\label{s:intro}

Let $M^3$ be a 3-dimensional Lorentz space form and $\gamma \subset
M^3$ a null curve parametrized by the natural (pseudo-arc) parameter
$s$ which normalizes the derivative of its tangent vector field. It
is known that in general $\gamma$ admits a curvature $k_\gamma(s)$
that is a Lorentz invariant and that uniquely determines $\gamma$ up
to Lorentz transformations. We consider the variational problem on
null curves defined by the Lorentz invariant functional
\begin{equation}\label{functional}
 \mathcal{L}(\gamma) = \int_\gamma{(m + k_\gamma)}ds, \quad m\in \R,
  \end{equation}
and ask the question of determining the explicit form for the
extremal trajectories. Motivations are provided by optimal control
theory and recent work on relativistic particle models associated
with action functionals of the type above (cf. \cite{P-NucPhysB},
\cite{NR-PhysLettB}, \cite{NMMK-NucPhysB}, \cite{FGL-PhysLettB}, and
references therein).

\vskip0.2cm

From the Euler--Lagrange equation of the action it follows that the
curvature of an extremal trajectory is either constant, or an
elliptic function (possibly degenerate) of the natural parameter. In
the first case, the extremals are orbits of 1-parameter subgroups of
the group of Lorentz transformations and can be described in terms
of elementary functions \cite{FGL-IJMPA}. In the second case, we are
led to a linear system of ODEs whose coefficients are
doubly-periodic functions. By the Fuchsian theory of ODEs, and in
particular the results of Picard \cite{Pi}, the trajectories are
then expressible in terms of the Weierstrass elliptic functions
$\wp$, $\sigma$ and $\zeta$. Alternatively, we follow a general
scheme for the reduction of constrained variational problems on
homogeneous spaces. We will use techniques from optimal control
theory based on the method of moving frames and Cartan's exterior
differential systems \cite{CartanLSLII}, \cite{Gr}
\cite{Gardner1983}, \cite{Gardner1989}, coupled with the reduction
procedure for systems admitting a Lie group of symmetries extended
to this setting \cite{BG}. For other applications of this general
scheme of integration we refer to \cite{GM-JGP}, \cite{MN-Forum},
\cite{MN-adS}.

\vskip0.2cm

In this article, we determine the explicit form of the extremal
curves when the target manifold is de Sitter 3-space. In this case,
the functional \eqref{functional} is invariant under the group
$\SL(2,\C)$, which doubly covers the identity component of the
isometry group of de Sitter 3-space. The starting point of our study
is the replacement of the original variational problem on null
curves in de Sitter 3-space by an $\SL(2,\C)$-invariant variational
problem for integral curves of a control system on
$M\cong\SL(2,\C)\times \R$ defined by a suitable Pfaffian
differential ideal $(\mathcal{I},\omega)$ with an independence
condition. This is accomplished by proving the existence of a
preferred $\SL(2,\C)$-invariant frame along null curves without flex
points (cf. Section \ref{s:pre}). We then follow a general
construction due to Griffiths \cite{Gr} and carry out a calculation
to associate to the variational problem a Pfaffian differential
system $\mathcal{J}$, the \textit{Euler--Lagrange system}, whose
integral curves are stationary for the associated functional. The
Euler--Lagrange system is defined on the \textit{momentum space} $Y
\cong \SL(2,\C)\times \R^3$, which turns out to carry a contact
structure, whose characteristic curves coincide with the integral
curves of $\mathcal{J}$. As a matter of fact, in the case at hand
all extremal trajectories arise as projections of integral curves of
the Euler--Lagrange system. The theoretical reason for this is that
all the derived systems of $(\mathcal{I},\omega)$ have constant rank
(cf. \cite{Br}). Further, we show that the characteristic flow
factors over a flow in an affine 3-dimensional subspace of
$\sl(2,\C)$ and find a Lax formulation of its defining differential
equation. This implies that the momentum map induced by the
Hamiltonian action of $\SL(2,\C)$ on $Y$ is constant on solution
curves of the Euler--Lagrange system, which leads to the integration
by quadratures of the extremals (cf. Section \ref{s:integration}).

\vskip0.2cm

The paper is organized as follows. Section \ref{s:pre} gives the
details of the construction of the canonical frame along null curves
with no flex points by the method of moving frames, and defines the
Pfaffian differential system of such frames. Section \ref{s:var-pbm}
studies the action functional \eqref{functional}, introduces the
corresponding Euler--Lagrange system, and proves the constancy of
the momentum map on its integral curves. Section \ref{s:integration}
focuses on the integration procedure. It first outlines some facts
from the theory of elliptic functions, and then carries out the
explicit integration of the extremals in terms of elliptic functions
and elliptic integrals of the third kind.


\section{Preliminaries}\label{s:pre}

\subsection{The geometry of de Sitter 3-space}

Let $\he(2)$ be the four-dimensional space of $2\times 2$ Hermitian
complex matrices endowed with the Lorentz metric given by the
quadratic form $\langle X , X \rangle = -\det X$, for all $X\in
\he(2)$. De Sitter 3-space, $\mathbb{S}^3_1$, can be viewed as the
set of $2\times 2$ Hermitian matrices of determinant $-1$:
\begin{equation}
 \mathbb{S}^3_1 =\left\{X\in \mathrm{Herm}(2) \,|\,  \det X =-1\right\}
    \end{equation}
with the induced metric $g$. The special linear group $\SL(2,\C)$
acts transitively by isometries on $\mathbb{S}^3_1$ via the action
\[
  A\cdot X = AXA^{\ast},
  \]
where $A^\ast$ stands for the conjugate transpose of $A$.
The
stability subgroup at
\[
 J=\left(\begin{array}{cc}
     0&-i\\
      i&0\\
       \end{array}\right)
       \]
is the group $\SL(2,\R)$ and $\mathbb{S}^3_1$ may be described as a
Lorentzian symmetric space
\[
 \mathbb{S}^3_1 \cong \SL(2,\C)/\SL(2,\R).
  \]
The projection
\[
 \pi : \SL(2,\C) \ni A \mapsto AJ{{A}}^{\ast} \in \mathbb{S}^3_1
   \]
makes $\SL(2,\C)$ into a principal bundle with structure group
$\SL(2,\R)$.

\vskip0.2cm

Let $\Omega = \alpha +i \beta$ be the Maurer--Cartan form of
$\SL(2,\C)$, where
\begin{equation}\label{MC}
 \alpha =
  \left(
   \begin{array}{cc}
     \alpha^1_1&\alpha^1_2\\
      \alpha^2_1&-\alpha^1_1\\
       \end{array}\right),
\quad \beta = \left(
   \begin{array}{cc}
     \beta^1_1&\beta^1_2\\
      \beta^2_1&-\beta^1_1\\
       \end{array}\right).
       \end{equation}
Note that the matrix of 1-forms $\beta$ is semibasic\footnote{We
recall that a differential form $\varphi$ on the total space of a
fiber bundle $\pi : P \to B$ is said to be \textit{semibasic} if its
contraction with any vector field tangent to the fibers of $\pi$
vanishes, or equivalently, if its value at each point $p\in P$ is
the pullback via $\pi^\ast_p$ of some form at $\pi(p)\in B$. Some
authors call such a form \textit{horizontal}. A stronger condition
is that $\varphi$ be \textit{basic}, meaning that it is locally the
pullback via $\pi^\ast$ of a form on the base $B$.} for the
projection $\pi$, and that the Lorentz metric $g$ on
$\mathbb{S}^3_1$ is given by
\[
  g = \left(\beta^1_1\right)^2 - \beta^2_1\beta^1_2.
   \]
The matrix $\alpha$ amounts to the Levi-Civita (spinor) connection
of $g$. The Maurer-Cartan equations of $\SL(2,\C)$, or the structure
equations, are given by:
\[
\begin{cases}
 d\alpha^1_1 = -\alpha^1_2 \wedge\alpha^2_1 + \beta^1_2 \wedge\beta^2_1\\
 d\alpha^2_1 = 2\alpha^1_1 \wedge\alpha^2_1 -2 \beta^1_1 \wedge\beta^2_1\\
 d\alpha^1_2 = -2\alpha^1_1 \wedge\alpha^1_2 + 2\beta^1_1 \wedge\beta^1_2
 \end{cases}
  \]
\[
 \begin{cases}
 d\beta^1_1 = -\beta^1_2 \wedge\alpha^2_1 + \beta^2_1 \wedge\alpha^1_2\\
 d\beta^2_1 = 2\beta^1_1 \wedge\alpha^2_1 -2 \beta^2_1 \wedge\alpha^1_1\\
 d\beta^1_2 = -2\beta^1_1 \wedge\alpha^1_2 +2 \beta^1_2 \wedge\alpha^1_1.
 \end{cases}
   \]

\subsection{The canonical frame along a null curve}\label{ss:frame}

A smooth parametrized curve
\[
  \gamma : I  \rightarrow \mathbb{S}^3_1,
  \]
where $I$ denotes any open interval of real numbers, is
\textit{null} (or \textit{lightlike}) if the velocity vector field
$\gamma'$ is null along $\gamma$, i.e.,
$g(\gamma'(t),\gamma'(t))=0$, for each $t\in I$. We will assume
throughout that $\gamma$ has no flex points, i.e., $\gamma'(t)$ and
$\gamma''(t)$ are linearly independent, for each $t\in I$, where
$\gamma''$ denotes the covariant derivative of $\gamma'$ along the
curve.

A frame field along $\gamma$ is a smooth map $\Gamma : I \to
\SL(2,\C)$ such that $\gamma = \pi \circ \Gamma$. For any such
frame, let $\Theta = \Gamma^{\ast}\Omega$ denote the pull-back of
the Maurer--Cartan form of $\SL(2,\C)$ and write $\Theta = \phi +i
\theta$. Given a frame field along $\gamma$, any other is given by
\[
 \tilde{\Gamma} = \Gamma X
  \]
where $X : I \to \SL(2,\R)$ is a smooth map. If $\tilde{\Theta} =
\tilde{\Gamma}^{\ast}\Omega = \tilde{\phi} +i \tilde{\theta}$, then
\begin{equation}\label{transf-rule}
 \tilde{\Theta} = X^{-1}\Theta X + X^{-1} dX.
  \end{equation}

A frame field $\Gamma : I \to \SL(2,\C)$ along $\gamma$ is said of
\textit{first order} if
\begin{equation}
 \theta^1_1 = \theta^1_2 =0, \quad \theta^2_1 \neq 0.
   \end{equation}
It easily seen that first order frame fields exist locally. If
$\Gamma : I \to \SL(2,\C)$ is a first-order frame along $\gamma$,
then any other is given by $\tilde{\Gamma} = \Gamma X$, where $X : I
\to G_1\subset \SL(2,\R)$ is a smooth map, and
\[
  G_1 = \left\{ \left(\begin{array}{cc}
     a&0\\
      c&a^{-1}\\
       \end{array}\right) \, : \, a\neq 0, c\in\R\right\}.
        \]
According to \eqref{transf-rule}, one computes
\begin{equation}
 \tilde{\phi}^1_2 = {a^2}\phi^1_2, \quad \tilde{\theta}^2_1 = \frac{1}{a^2}\theta^2_1.
  \end{equation}
Moreover, for first-order frames the form $\phi^1_2$ is semibasic.
If the curve $\gamma$ has no flex points, then $\phi^1_2 \neq 0$. We
say that the curve has \textit{positive} or \textit{negative spin}
according as $\phi^1_2$ is a positive or negative multiple of
$\theta^2_1$.

Under our assumption, it follows from the transformation formula
\eqref{transf-rule} that there always exist local first order frames
along $\gamma$ such that
\begin{equation}\label{2nd}
 {\phi}^1_2 = \varepsilon\theta^2_1,
  \end{equation}
where $\varepsilon =\pm 1$, according as $\gamma$ has positive or
negative spin. A first order frame field is said of \textit{second
order} if it satisfies \eqref{2nd} on $I$.

A second order frame field along $\gamma$ is said a \textit{canonical}
frame if
\begin{equation}
 {\phi}^1_1 = 0.
  \end{equation}
Note that canonical frame fields exist on $I$, and that if $\Gamma$ is
a canonical frame, then any other is given by $\pm \Gamma$.

\vskip0.2cm

Summarizing, we have proved the following.

\begin{prop}\label{existenceSF}
Let $\gamma : I \subset \R \to \mathbb{S}^3_1$ be a null curve with
no flex points. Then, there exists a frame along $\gamma$,
the \textbf{canonical frame},
\[
 \Gamma : I \to \SL(2,\C)
   \]
such that
\begin{equation}\label{canonical-frame}
 \Gamma^{-1}d\Gamma =
  \left(
   \begin{array}{cc}
     0&\varepsilon\\
      k+i&0\\
       \end{array}\right)\omega,
       \end{equation}
where $\varepsilon = \pm 1$,
$\omega$ is a nowhere vanishing 1-form, the \textbf{canonical pseudo-arc
element}, and $k : I \to \R$ is a smooth function, the
\textbf{curvature} of $\gamma$. Moreover, if $\Gamma$ is a canonical
frame field along $\gamma$, then any other canonical frame field is
given by $\pm\Gamma$.
\end{prop}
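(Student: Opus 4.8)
The plan is to construct the canonical frame by a sequence of successive reductions of the structure group, exactly following the order in which the frame conditions were introduced in Section~\ref{ss:frame}. I would begin by invoking the existence of local first-order frames, for which $\theta^1_1=\theta^1_2=0$ and $\theta^2_1\neq 0$; this is the first reduction, bringing the structure group down to $G_1$. Along such a frame the no-flex-point hypothesis translates into $\phi^1_2\neq 0$ (with a definite sign relative to $\theta^2_1$, which defines the spin $\varepsilon$). Then, using the transformation law $\tilde\phi^1_2=a^2\phi^1_2$ and $\tilde\theta^2_1=a^{-2}\theta^2_1$ under $X:I\to G_1$, I would choose $a$ (locally, smoothly, which is possible precisely because $\phi^1_2$ and $\theta^2_1$ never vanish and have the same sign) so as to normalize $\phi^1_2=\varepsilon\theta^2_1$; this yields second-order frames and reduces the group to the subgroup of $G_1$ preserving \eqref{2nd}, which is essentially the unipotent part $\{a=\pm 1,\,c\in\R\}$. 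Finally, examining how $\phi^1_1$ transforms under the remaining one-parameter $c$-action (it changes by an exact term plus a multiple of the semibasic forms), I would solve the resulting affine equation in $c$ to arrange $\phi^1_1=0$; the only freedom left is $a=\pm 1$, $c=0$, i.e.\ $\pm\Gamma$, which gives the last sentence of the statement.

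Once all three conditions $\theta^1_1=\theta^1_2=0$, $\phi^1_2=\varepsilon\theta^2_1$, $\phi^1_1=0$ hold, I would read off the matrix $\Gamma^{-1}d\Gamma=\Theta=\phi+i\theta$. By construction $\theta^1_1=0$ and $\phi^1_1=0$, so the $(1,1)$ entry vanishes; tracelessness then forces the $(2,2)$ entry to vanish as well. The $(1,2)$ entry is $\phi^1_2+i\theta^1_2=\phi^1_2=\varepsilon\theta^2_1$. Setting $\omega:=\theta^2_1$ (which is nowhere zero, hence a genuine coframe datum on $I$), the $(1,2)$ entry is $\varepsilon\,\omega$ and the $(2,1)$ entry is $\phi^2_1+i\theta^2_1=\phi^2_1+i\omega$. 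It remains to see that $\phi^2_1$ is a (smooth, real) functional multiple of $\omega$: since $\omega=\theta^2_1$ is a nowhere-vanishing $1$-form on the $1$-dimensional manifold $I$, \emph{every} $1$-form on $I$ is automatically $f\,\omega$ for a unique smooth $f$, so we may write $\phi^2_1=k\,\omega$ and define $k:I\to\R$ to be the curvature. This produces exactly the normal form \eqref{canonical-frame}.

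The main obstacle — really the only place where anything beyond bookkeeping happens — is verifying that each normalization step can be carried out by a \emph{smooth} gauge transformation valued in the correct subgroup, and in particular that the equations for $a$ and for $c$ are solvable globally on $I$ (not merely pointwise). For $a$ this is the observation that $\phi^1_2/\theta^2_1$ is a nowhere-vanishing smooth function of constant sign $\varepsilon$, so $a^2=\varepsilon\,\theta^2_1/\phi^1_2>0$ has a smooth positive square-root solution; for $c$ it is the fact that, after the second-order reduction, the transformation rule for $\phi^1_1$ under the residual unipotent group is of the form $\tilde\phi^1_1=\phi^1_1+(\text{multiple of }\omega)+dc\cdot(\dots)$, an affine first-order condition on $c$ that can be integrated along $I$ (equivalently, solved algebraically once everything is expressed relative to the coframe $\omega$). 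I expect the bulk of the written proof to be the explicit computation, via \eqref{transf-rule}, of how $\phi^1_1$, $\phi^1_2$, $\theta^2_1$ transform under $X\in G_1$ — these are routine matrix manipulations with the Maurer–Cartan form — together with the remark, already recorded in the text, that $\phi^1_2$ is semibasic for first-order frames, which is what makes the normalization $\phi^1_2=\varepsilon\theta^2_1$ meaningful. The uniqueness claim ($\pm\Gamma$) then follows by running the same transformation formulas in reverse: any gauge change preserving all three conditions must have $a^2=1$ and $c=0$.
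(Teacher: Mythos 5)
Your proposal is correct and follows essentially the same route as the paper: the successive reductions to first-order, second-order, and canonical frames via the transformation law \eqref{transf-rule}, with the residual freedom $a=\pm1$, $c=0$ giving the $\pm\Gamma$ ambiguity. The only detail worth noting is that in the last normalization no $dc$ term actually enters $\tilde\phi^1_1$ (since $a$ is already constant), so $c$ is determined purely algebraically from $\tilde\phi^1_1=\phi^1_1+a^{-1}c\,\phi^1_2$ — which is the alternative you yourself record, so nothing is missing.
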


\begin{remark}
Henceforth, we abuse the terminology and refer to the $\Z_2$-class
$[\Gamma] =\{\pm \Gamma\}$ as the canonical frame $\Gamma$ of a null
curve $\gamma$.
\end{remark}

\begin{remark}
Conversely, for a smooth function $k : I \to \R$, let $H (k) : I \to
\sl(2,\C)$ be
\begin{equation}\label{hamiltoniano}
 H(k) = \left(
  \begin{array}{cc}
    0&\varepsilon\\
     k+i&0\\
      \end{array}\right).
       \end{equation}
Then, by solving a linear system of ODEs, there exists a unique (up
to left multiplication)
\[
 \Gamma : I \to \SL(2,\C)
  \]
such that
\begin{equation}\label{linear-system}
 \Gamma^{-1}{\Gamma}' = H(k).
  \end{equation}
In particular, $\gamma = \Gamma J {\Gamma}^{\ast} : I \to
\mathbb{S}^3_1$ is a null curve without flex points
and with curvature $k$.

\end{remark}


\begin{remark}[Null helices]
The simplest examples are null helices, that is, null curves with
constant curvature. Such curves are orbits of 1-parameter subgroups
of $\SL(2,\C)$ (cf. Remark \ref{moment-map}) and
have been described by elementary functions in \cite{FGL-IJMPA}.

\end{remark}

\subsection{The Pfaffian system of canonical frames}\label{ss:spin-sys}

Let $(\mathcal{I}, \omega)$ be the Pfaffian differential system
on $M := \SL(2,\C) \times \R$ defined by the differential
ideal $\mathcal{I}$ generated  by the linearly independent 1-forms
\[
 \begin{cases}
  \eta^1 = \beta^1_1, \quad \eta^2 = \beta^1_2, \quad
   \eta^3 = \alpha^1_1 -\varepsilon\omega,\\
    \eta^4 = \alpha^1_1,\quad \eta^5 = \alpha^2_1-k\omega,
     \end{cases}
      \]
where
\[
 \omega := \beta^2_1
  \]
gives the independence condition $\omega \neq 0$.

 Now, let $\gamma : I \to \mathbb{S}^3_1$ be a null curve without flex points.
Then, by Proposition \ref{existenceSF}, the curve $g
=(\Gamma_\gamma,k_\gamma) : I \to M$, whose components are,
respectively, the canonical frame field along $\gamma$ and the
curvature of $\gamma$, is an integral curve of the Pfaffian system
$(\mathcal{I}, \omega)$. Conversely, if $g= (\Gamma,k) : I \to M$ is
an integral curve of the Pfaffian system $(\mathcal{I}, \omega)$,
then $\gamma = \Gamma J \Gamma^\ast : I \to \mathbb{S}^3_1$ defines
a null curve with no flex points, $\Gamma$ is the canonical frame
field along $\gamma$, and $k$ is the curvature of $\gamma$. For this
reason, null curves without flex points in $\mathbb{S}^3_1$ can be
identified  with the integral curves of the Pfaffian system
$(\mathcal{I}, \omega)$.

\begin{defn}
The Pfaffian differential system $(\mathcal{I}, \omega)$ will be
referred to as the \textbf{canonical system}.
\end{defn}

\begin{remark}
A smooth curve $g=(\Gamma, k) : I \to M$ is an integral curve of the
canonical system if and only if $\Gamma : I \to \SL(2,\C)$ is a
solution of the linear system
\[
 \Gamma^{-1}(t){\Gamma}'(t) = H(k(t)).
  \]
The function $k$ plays the role of a control. Note that if we assign
a smooth map $k: I \to \R$ and a point $A_0 \in \SL(2,\C)$, then
there exists a unique integral curve $g=(\Gamma, k)$ of the control
system satisfying the initial condition $\Gamma(t_0) = A_0$, for
$t_0 \in I$.
\end{remark}

Exterior differentiation and use of the Maurer-Cartan equations
give, modulo the algebraic ideal generated by
$\eta^1,\dots,\eta^{5}$, the \textit{quadratic equations} of
$(\mathcal{I}, \omega)$:
\begin{equation} \label{QE1}
\begin{cases}
 d\omega \equiv 2(k\eta^1 + \eta^4) \wedge \omega,\\
  d\eta^1 \equiv -(k\eta^2 + \eta^3) \wedge \omega, \\
   d\eta^2 \equiv -2\varepsilon \eta^1 \wedge \omega, \\
    d\eta^3 \equiv -2\varepsilon(k\eta^1 + 2\eta^4) \wedge \omega, \\
    d\eta^4 \equiv (\eta^2 -k\eta^3 +\varepsilon \eta^5) \wedge \omega, \\
     d\eta^5 \equiv -\left(dk +2(1+k^2)\eta^1\right) \wedge \omega.
      \end{cases}
        \end{equation}

\section{The variational problem and the Euler-Lagrange system}\label{s:var-pbm}

\subsection{The constrained variational problem}

Let $\mathcal{N}$ be the space of null curves in $\mathbb{S}^3_1$
without flex points. We consider the action functional
\begin{equation}\label{action}
 \mathcal{L}_m : \gamma \in \mathcal{N} \mapsto
  \int_{I_\gamma}{(m + k_\gamma)\omega_\gamma},
   \quad m\in \R,
    \end{equation}
where ${I_\gamma}$ is the domain of definition of the curve,
$k_\gamma$ is its curvature, and $\omega_\gamma$ the canonical
pseudo-arc element (cf. Section \ref{s:pre}). We refer to
\cite{NR-PhysLettB}, \cite{P-NucPhysB}, \cite{NMMK-NucPhysB},
\cite{FGL-PhysLettB} and the references therein for a discussion on
the particle model associated with this action functional.

\begin{defn}
A curve $\gamma\in \mathcal{N}$ is said to be an \textit{extremal
trajectory} (or simply a \textit{trajectory}) in $\mathbb{S}^3_1$ if
it is a critical point of the action functional $\mathcal{L}_m$ when
one considers compactly supported variations. The constant $m$ is
called the Lagrange multiplier of the trajectory.
\end{defn}

\begin{remark}
As usual, by a compactly supported variation of $\gamma \in
\mathcal{N}$ we mean a mapping $V : I \times (-\epsilon, \epsilon)
\to \mathbb{S}^3_1$ such that: 1) $\forall u \in (-\epsilon,
\epsilon)$, the map $\gamma_u : = V(t,u) : I \to \mathbb{S}^3_1$ is
a null curve without flex points; 2) $\gamma_0 = \gamma(t)$,
$\forall t \in I$; 3) there exists a closed interval $[a,b] \subset
I$ such that
\begin{equation}\label{var}
V(t,u) = \gamma(t), \quad \forall t \in I\setminus [a,b], \,
  \forall u \in (-\epsilon, \epsilon).
    \end{equation}
Accordingly, a curve $\gamma \in \mathcal{N}$ is an extremal
trajectory if, for every compactly supported variation $V$, we have
that
\[
 \left. \frac{d}{du}
   \left( \int_{a_V}^{b_V}\left(m+k_{\gamma_u}\right)ds_u \right)\right\vert_{u=0}
   = 0,
     \]
where $[a_V, b_V ]$ is the support of the variation, i.e., the
smallest closed interval for which \eqref{var} holds, and $ds_u$ is
the canonical pseudo-arc element of the curve $\gamma_u$.

In \cite{FGL-PhysLettB}, the authors derive the Euler--Lagrange
equation associated with \eqref{action} for null curves with
prescribed endpoints and the same canonical frame at each end.
\end{remark}

By the preceding discussion (cf. Proposition \ref{existenceSF} and
Section \ref{ss:spin-sys}), a curve $\gamma \in\mathcal{N}$ is an
extremal trajectory if and only if the pair $g = (\Gamma_\gamma,
k_\gamma)$ of its canonical frame field and curvature function
 is a critical point of the variational problem on the
space $\mathcal{V}(\mathcal{I},\omega)$ of all integral curves of
$(\mathcal{I},\omega)$ defined by the functional
\begin{equation}\label{var-action}
 \widehat{\mathcal{L}}: g \in \mathcal{V}(\mathcal{I},\omega)
  \mapsto \int_{I_{g}} g^\ast ((m+k)\omega),
   \end{equation}
when one considers compactly supported variations through integral
curves of $(\mathcal{I},\omega)$.


\begin{remark}
The replacement of the original functional by the functional
\eqref{var-action} is the starting point in the application of the
Griffiths formalism. This approach to constrained variational
problems with one independent variable provides conditions for
criticality in terms of Pfaffian differential systems and is
particularly well suited when one considers compactly supported
variations among constrained curves. More importantly, it furnishes
the appropriate setting for the explicit integration of the
extremals (cf. \cite{Gr}, \cite{Br}, \cite{BG}, \cite{Hsu} and
below).

\end{remark}

\subsection{The Euler--Lagrange system}

Associated to the functional $\widehat{\mathcal{L}}$ we will
introduce, following Griffiths \cite{Gr}, the Euler--Lagrange system
$(\mathcal{J},\omega)$ on a new manifold $Y$, which will be made
explicit below.

For this, let $Z\subset T^\ast M$ be the affine subbundle defined by
\[
  Z = (m+k)\omega + I \subset T^\ast M,
   \]
where $I$ is the subbundle of $T^\ast M$ associated to the
differential ideal $\mathcal{I}$.
The 1-forms $(\eta^1$, $\dots$, $\eta^5$, $\omega)$ induce a global
affine trivialization of $Z$, which may be identified with $M\times
\R^5$ by setting
\[
 M\times \R^5 \ni ((\Gamma,k); x_1,\dots,x_5)  \mapsto
 \omega_{|(\Gamma,k)} +{x_j\eta^j}_{|(\Gamma,k)} \in Z
 \]
(throughout we use summation convention). Thus, the Liouville (canonical)
1-form of $T^\ast M$ restricted to $Z$ is given by
\[
 \mu = (m + k)\omega + x_j\eta^j.
  \]
Exterior differentiation and use of the quadratic equations
\eqref{QE1} give
\[
\begin{split}
d\mu &\equiv dk \wedge \omega + 2(m +k)(k\eta^1 +\eta^4)
\wedge\omega
    + dx_j\wedge\eta^j\\
 &\quad -x_1(k\eta^2 +\eta^3)\wedge\omega - 2\varepsilon x_2\eta^1 \wedge\omega\\
 &\quad -2\varepsilon x_3(k\eta^1 + 2\eta^4)\wedge\omega + x_4 (\eta^2 -k\eta^3
 +\varepsilon \eta^5) \wedge\omega \\
 &\quad -x_5(dk +2(1+k^2)\eta^1)\wedge \omega
 \quad \mod \{\eta^i\wedge\eta^j\}.
 \end{split}
  \]

Next, we compute the Cartan system $\mathcal{C}(d\mu) \subset
T^{\ast}Z$ determined by the 2-form $d\mu$, i.e., the Pfaffian
system generated by the 1-forms
\[
 \left\{ i_\xi d\mu \, |\, \xi \in \mathfrak{X}(Z) \right\} \subset \Omega^1(Z).
  \]
Contracting $d\mu$ with the vector fields of the tangent frame
\[
 \left(\frac{\partial}{\partial{\omega}},\frac{\partial}{\partial{k}},
  \frac{\partial}{\partial{\eta^1}},\dots,
   \frac{\partial}{\partial{\eta^5}},\frac{\partial}{\partial{x_1}},\dots,
    \frac{\partial}{\partial{x_5}}\right)
      \]
on $Z$, dual to the coframe
\[
 \left(\omega, dk, \eta^1,\dots,\eta^5,dx_1,\dots,dx_5\right),
  \]
we find the 1-forms
\begin{eqnarray}
&& \eta^1,\dots,\eta^5, \\
 \pi_1 &=& (x_5 -1)dk, \label{p1}\\
 \pi_2 &=& (1-x_5)\omega, \label{p2} \\
  \beta_1 &=&dx_1-2\left\{km + k^2 -\varepsilon x_2 -\varepsilon kx_3
     -x_5(1+k^2)\right\}\omega, \label{beta1}\\
   \beta_2 &=&dx_2 + (kx_1 -x_4)\omega, \label{beta2} \\
    \beta_3 &=& dx_3 + (x_1 +kx_4)\omega, \label{beta3}\\
   \beta_4 &=&dx_4-\left\{2(m+k) -4\varepsilon x_3\right\}\omega, \label{beta4}\\
    \beta_5 &=& dx_5 -\varepsilon x_4\omega. \label{beta5}
     \end{eqnarray}
We have proven the following.

\begin{lemma}
The Cartan system $(\mathcal{C}(d\mu),\omega)$ associated to
$(\mathcal{I}, \omega)$ is the differential ideal on $Z\cong M\times
\R^5$ generated by
\[
 \left\{\eta^1,\dots,\eta^5, \pi_1, \pi_2,\beta_1,\dots,\beta_5\right\}
  \]
and with independence condition $\omega$.
\end{lemma}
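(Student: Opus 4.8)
The plan is to verify directly that the Cartan system $\mathcal{C}(d\mu)$ is generated by the listed 1-forms by computing all interior products $i_\xi\, d\mu$ as $\xi$ ranges over the tangent frame dual to the coframe $(\omega, dk, \eta^1,\dots,\eta^5, dx_1,\dots, dx_5)$, and then observing that these exhaust a spanning set. First I would record the expression for $d\mu$ obtained above and clean it up by grouping terms according to the coframe: rewrite $d\mu$ modulo the algebraic ideal $\{\eta^i\wedge\eta^j\}$ as a sum of wedge products each of which involves exactly one of $\omega$, $dk$, $\eta^1,\dots,\eta^5$, $dx_1,\dots,dx_5$ on the left and another coframe element on the right, collecting the coefficient functions carefully (this is where the factors $km+k^2-\varepsilon x_2-\varepsilon k x_3-x_5(1+k^2)$, $kx_1-x_4$, $x_1+kx_4$, $2(m+k)-4\varepsilon x_3$, and $\varepsilon x_4$ arise).

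Next I would contract successively with each dual vector field. Contracting with $\partial/\partial x_j$ picks out the coefficient of $dx_j\wedge(\cdot)$ and yields the forms $\beta_1,\dots,\beta_5$; note in particular that contracting with $\partial/\partial x_5$ must be done with care because $x_5$ appears both through the $dx_5\wedge\eta^5$ term and through the $dk\wedge\omega$ coefficient, producing $\beta_5 = dx_5 - \varepsilon x_4\,\omega$. Contracting with $\partial/\partial\eta^i$ returns (a combination of) the $\eta^j$ themselves together with $\omega$-multiples, so these contribute nothing beyond the algebraic generators $\eta^1,\dots,\eta^5$ already in the ideal. Contracting with $\partial/\partial k$ isolates the terms in which $dk$ appears, namely $dk\wedge\omega$ and $-x_5\,dk\wedge\omega$, giving $\pi_1=(x_5-1)dk$; contracting with $\partial/\partial\omega$ isolates the $\omega$-coefficient and gives $\pi_2=(1-x_5)\omega$. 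Since the tangent frame we contract against is a global frame on $Z$, the $1$-forms so produced span the image of $d\mu$ under all contractions, hence generate $\mathcal{C}(d\mu)$ as a module; closing it up as a differential ideal and appending the independence condition $\omega\neq 0$ gives the statement.

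The main obstacle is purely bookkeeping: keeping track of the several places where a single coframe element (most notably $\omega$ and $dk$) enters $d\mu$, and of the contributions of $\eta^i$ that must be reduced modulo the ideal $\{\eta^i\wedge\eta^j\}$; one has to be confident that working modulo that algebraic ideal does not discard anything relevant to $\mathcal{C}(d\mu)$, which is justified because the $\eta^i$ are already among the generators and adding $\eta^i\wedge\eta^j$ terms to $d\mu$ changes each contraction only by an $\R$-linear combination of the $\eta^j$'s. Everything else is a routine, if lengthy, computation using the quadratic equations \eqref{QE1}, and the conclusion is exactly the displayed list of generators $\{\eta^1,\dots,\eta^5,\pi_1,\pi_2,\beta_1,\dots,\beta_5\}$ with independence condition $\omega$.
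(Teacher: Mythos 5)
Your strategy is exactly the paper's: expand $d\mu$ modulo $\{\eta^i\wedge\eta^j\}$ using the quadratic equations \eqref{QE1}, contract against the global tangent frame dual to $(\omega,dk,\eta^1,\dots,\eta^5,dx_1,\dots,dx_5)$, and observe that the resulting $1$-forms span $\{i_\xi d\mu\}$; your remark that discarding $\eta^i\wedge\eta^j$ terms is harmless because each such term contributes only $\eta$-multiples to any contraction is the right justification and is left implicit in the paper. The one correction: you have the attributions of the contractions swapped. Since $d\mu$ contains $dx_j\wedge\eta^j$ and the various $f\,\eta^j\wedge\omega$ terms, it is $i_{\partial/\partial\eta^j}d\mu$ that produces $-\beta_j=-dx_j+f\omega$, while $i_{\partial/\partial x_j}d\mu$ merely reproduces $\eta^j$ (already a generator); likewise $i_{\partial/\partial k}d\mu$ yields $(1-x_5)\omega=\pi_2$ and $i_{\partial/\partial\omega}d\mu$ yields $(x_5-1)dk=\pi_1$ modulo the $\eta$'s. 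Because you contract against the entire frame, the union of all contractions is still exactly $\{\eta^1,\dots,\eta^5,\pi_1,\pi_2,\beta_1,\dots,\beta_5\}$, so the conclusion stands; but as written the recipe is internally inconsistent (the ``coefficient of $dx_j\wedge(\cdot)$'' is $\eta^j$, not $\beta_j$) and should be fixed.
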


\begin{defn}

The involutive prolongation of $(\mathcal{C}(d\mu),\omega)$ on $Z$
gives rise to a Pfaffian differential system $(\mathcal{J},\omega)$
on a submanifold $Y \subset Z$, which is called the
\textit{Euler--Lagrange differential system} associated to the
variational problem. The submanifold $Y$ is called the
\textit{momentum space}. We refer the reader to the book of
Griffiths \cite{Gr} for a discussion of how this system is derived
and for more details on Pfaffian systems.
\end{defn}

\begin{lemma}
The momentum space $Y$ is the 9-dimensional submanifold of $Z$
defined by the equations
\[
 x_5 = 1, \quad x_4 = 0, \quad x_3 = \frac{\varepsilon}{2}(m+k).
  \]
The Euler--Lagrange system $(\mathcal{J},\omega)$ is the Pfaffian
differential system on $Y$ with independence condition $\omega$
generated by the 1-forms
\[
\begin{cases}
 {\eta^1}_{|Y},\dots,{\eta^5}_{|Y},\\
  \sigma_1=  dx_1 + \left(k^2 -mk +2\varepsilon x_2 +2 \right)\omega,\\
  \sigma_2 =  dx_2 + kx_1\omega,\\
   \sigma_3=  dk + 2\varepsilon x_1 \omega,\\
    \end{cases}
       \]
Moreover,
\[
\begin{split}
 \mu_{|Y} & = \frac{1}{2}\left(m-k\right)\beta^2_1 -\frac{\varepsilon}{2}k'\beta^1_1
 +\frac{1}{2}
  \left(\frac{k''}{2} -\varepsilon k(k-m) -2\varepsilon \right)\beta^1_2\\
  &\quad + \frac{\varepsilon}{2}(m+k)\alpha^1_2 + \alpha^2_1.
    \end{split}
     \]
\end{lemma}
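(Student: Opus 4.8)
The plan is to run Griffiths' involutive‑prolongation algorithm on the Cartan system $(\mathcal{C}(d\mu),\omega)$ of the previous lemma, reading off one restriction at a time the submanifold $Y$ and the generators of $\mathcal{J}$. Among the generators, $\pi_2 = (1-x_5)\omega$ forces every integral curve to lie in $Z_1 := \{x_5 = 1\}$, $\omega$ being the independence form; on $Z_1$ the generator $\pi_1 = (x_5-1)\,dk$ is then identically zero. On $Z_1$ one has $dx_5 = 0$, so $\beta_5$ restricts to $-\varepsilon x_4\,\omega$, which forces $Z_2 := \{x_5 = 1,\ x_4 = 0\}$; on $Z_2$ one has $dx_4 = 0$, so $\beta_4$ restricts to $-\bigl(2(m+k)-4\varepsilon x_3\bigr)\omega$, and since $\varepsilon^2 = 1$ this forces $x_3 = \tfrac{\varepsilon}{2}(m+k)$. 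These three independent conditions cut out the codimension‑$3$ submanifold $Y$ of the statement, so $\dim Y = 12 - 3 = 9$.

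Next I would pull the surviving generators $\beta_1,\beta_2,\beta_3$ back to $Y$. From $x_4 = 0$ one gets $\beta_2|_Y = dx_2 + kx_1\,\omega = \sigma_2$; from $x_3 = \tfrac{\varepsilon}{2}(m+k)$, so $dx_3|_Y = \tfrac{\varepsilon}{2}\,dk$, one gets $\beta_3|_Y = \tfrac{\varepsilon}{2}\,dk + x_1\,\omega = \tfrac{\varepsilon}{2}\sigma_3$; and substituting $x_5 = 1$ and $x_3 = \tfrac{\varepsilon}{2}(m+k)$ into the $\omega$‑coefficient of $\beta_1$ and simplifying (the terms $\varepsilon k x_3 = \tfrac12(km+k^2)$ and $x_5(1+k^2) = 1+k^2$ combine cleanly) gives $\beta_1|_Y = dx_1 + (k^2 - mk + 2\varepsilon x_2 + 2)\,\omega = \sigma_1$. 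Since $\pi_1,\pi_2,\beta_4,\beta_5$ all restrict to zero on $Y$, the restricted Cartan system is generated on $Y$ by $\eta^1|_Y,\dots,\eta^5|_Y,\sigma_1,\sigma_2,\sigma_3$, as claimed.

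The step that genuinely must be checked is that the prolongation terminates at $Y$, i.e. that $(\mathcal{J},\omega)$ is already involutive and is a bona fide differential ideal. For this I would verify: (i) on the $9$‑manifold $Y$ the eight $1$‑forms $\eta^1|_Y,\dots,\eta^5|_Y,\sigma_1,\sigma_2,\sigma_3$ are pointwise independent and $\omega$ is independent of them (the six independent Maurer–Cartan forms of $\SL(2,\C)$ together with $dk, dx_1, dx_2$ being a coframe on $Y$), so their common annihilator is a line field transverse to $\{\omega = 0\}$ and there is a unique integral curve through each point; and (ii) $d\eta^j|_Y$ and $d\sigma_i|_Y$ lie in the algebraic ideal generated by these eight forms. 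For (ii) one feeds the quadratic equations \eqref{QE1} into the computation of the $d\eta^j$ after replacing $dk$ by $\sigma_3 - 2\varepsilon x_1\,\omega$, and differentiates $\sigma_1,\sigma_2,\sigma_3$ directly, using $d\omega \equiv 2(k\eta^1+\eta^4)\wedge\omega$ and replacing $dk, dx_1, dx_2$ by $\sigma_3, \sigma_1, \sigma_2$ minus their $\omega$‑parts; all $\omega\wedge\omega$ contributions vanish and the remainders are patently in the ideal. This is the main obstacle — a somewhat lengthy bookkeeping exercise with the structure equations and the relations defining $Y$ — and it certifies that no further restriction or prolongation is needed, so $(Y,\mathcal{J},\omega)$ is the Euler–Lagrange system.

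Finally, the formula for $\mu_{|Y}$ follows by substituting $x_5 = 1$, $x_4 = 0$, $x_3 = \tfrac{\varepsilon}{2}(m+k)$ into $\mu = (m+k)\omega + x_j\eta^j$ and expanding $\omega = \beta^2_1$ and the $\eta^j$ in Maurer–Cartan entries: the $\beta^2_1$‑terms collect to $\tfrac12(m-k)\beta^2_1$, the $\eta^3$‑term produces $\tfrac{\varepsilon}{2}(m+k)\alpha^1_2$, the $\eta^5$‑term produces $\alpha^2_1$, and $x_1\beta^1_1 + x_2\beta^1_2$ remains. To put the last two coefficients into the stated closed form one uses the defining relations of $\mathcal{J}$ along an integral curve: $\sigma_3 = 0$ gives $x_1 = -\tfrac{\varepsilon}{2}k'$ (with $k' := dk/\omega$), and then $\sigma_1 = 0$ gives $x_1' = -(k^2 - mk + 2\varepsilon x_2 + 2)$, whence $x_2 = \tfrac12\bigl(\tfrac{k''}{2} - \varepsilon k(k-m) - 2\varepsilon\bigr)$; substituting these in yields the displayed expression.
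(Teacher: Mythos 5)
Your proposal is correct and follows essentially the same route as the paper: the successive restrictions forced by $\pi_2$, $\beta_5$, $\beta_4$ cut out $Z_1\supset Z_2\supset Z_3=Y$, and the surviving generators $\beta_1,\beta_2,\beta_3$ restrict to $\sigma_1,\sigma_2,\tfrac{\varepsilon}{2}\sigma_3$, after which the existence of an integral element over every point of $Y$ terminates the prolongation. Your additional verifications (pointwise independence of the eight generators together with $\omega$, differential closure, and the substitution $x_1=-\tfrac{\varepsilon}{2}k'$, $x_2=\tfrac12(\tfrac{k''}{2}-\varepsilon k(k-m)-2\varepsilon)$ yielding the formula for $\mu_{|Y}$) only supply details the paper's proof leaves implicit.
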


\begin{proof}
Let $V_1(d\mu)\hookrightarrow \mathbb{P}[T(Z)] \to Z$ be the
totality of 1-dimensional integral elements of $\mathcal{C}(d\mu)$.
In view of \eqref{p1} and \eqref{p2}, we find that
\[
 V_1(d\mu)_{|((\Gamma,k);x)} \neq \emptyset \iff x_5 = 1.
  \]
Thus, the first involutive prolongation of
$(\mathcal{C}(d\mu),\omega)$, i.e., the image $Z_1\subset Z$ of
$V_1(d\mu)$ with respect to the natural projection $V_1(d\mu)\to Z$,
is given by
\[
  Z_1 = \{((\Gamma,k);x) \in Z : x_5 = 1 \}.
  \]
Next, the restriction of $\beta_5$ to $Z_1$ takes the form $-
\varepsilon x_4 \omega$. Thus the second involutive prolongation
$Z_2$ is characterized by the equations
\[
  x_5 = 1, \quad x_4 = 0.
   \]
Considering then the restriction of $\beta_4$ to $Z_2$ yields the
equations
\[
  x_5 = 1, \quad x_4 = 0, \quad x_3 = \frac{\varepsilon}{2}(m+k),
   \]
which define the third involutive prolongation $Z_3$. Now, the
restriction $\mathcal{C}_3(d\mu)$ to $Z_3$ of $\mathcal{C}(d\mu)$ is
generated by the 1-forms $\eta^1,\dots,\eta^5$ and
\begin{eqnarray*}
 \sigma_1 &=& dx_1 + \left(k^2 -mk +2\varepsilon x_2 +2 \right)\omega, \\
  \sigma_2 &=& dx_2 + kx_1\omega,\\
   \sigma_3 &=& dk + 2\varepsilon x_1 \omega.
    \end{eqnarray*}
This implies that there exists an integral element of $V_1(d\mu)$
over each point of $Z_3$, i.e., $V_1(d\mu)_{p} \neq \emptyset $, for
each $p\in Z_3$. Hence $Y := Z_3$ and $(\mathcal{J},\omega) :=
(\mathcal{C}_3(d\mu),\omega)$ is the involutive prolongation of the
Cartan system $(\mathcal{C}(d\mu),\omega)$.

\end{proof}

\begin{remark}
The importance of this construction is that the natural projection
$\pi_Y : Y \to M$ maps integral curves of the Euler-Lagrange system
to extremals of the variational problem associated to
$(M,\mathcal{I})$. The converse is not true in general. However, it
is known to be true if all the derived systems of
$(\mathcal{I},\omega)$ are of constant rank (cf. \cite{Br},
\cite{Hsu}). In our case, one can easily check, using \eqref{QE1},
that all the derived systems of $(\mathcal{I},\omega)$ have indeed
constant rank, so that all the extremals do arise as projections of
integral curves of the Euler--Lagrange system (see also Section
\ref{ss:e-l-eq}).
\end{remark}

\begin{remark}
A direct calculation shows that
\begin{equation}\label{contact-con}
 \mu_{|Y} \wedge (d\mu_{|Y})^4 \neq 0
  \end{equation}
on $Y$, i.e., the variational problem is nondegenerate.\footnote{A
variational problem is said to be \textit{nondegenerate} in case
\[
 \dim Y =2m+1 \quad \text{and}\quad
   \mu_{|Y} \wedge (d\mu_{|Y})^m \neq 0.
    \]
Let ${V}(\mathcal{J},\omega)$ and
${V}(\mathcal{C}(d\mu{|Y}),\omega)$ denote the set of integral
elements of the Euler--lagrange system and of the Cartan system. For
nondegenerate problems we have
${V}(\mathcal{J},\omega)={V}(\mathcal{C}(d\mu{|Y}),\omega)$, whereas
in general we only have inclusion ${V}(\mathcal{J},\omega)\subset
{V}(\mathcal{C}(d\mu{|Y},\omega)$ (cf. \cite{Gr}, p. 84). For a discussion
on the relation between the classical Legendre transform and the
construction of the Euler--Lagrange system on the momentum space,
with special attention to the nondegeneracy condition, we refer the reader
to \cite{Gr}, Chapter I, Section e). } This
implies that $\mu_{|Y}$ is a contact form and that there exists a
unique vector field $\zeta \in \mathfrak{X}(Y)$, the
\textit{characteristic vector field} of the contact structure, such
that $\mu_{|Y}(\zeta) =1$ and $i_\zeta\,d\mu_{|Y} =0$. In
particular, the integral curves of the Euler-Lagrange system
coincide with the characteristic curves of $\zeta$.

\end{remark}

\subsection{The natural equation of integral curves}\label{ss:e-l-eq}

Let $\mathcal{V}(\mathcal{J},\omega)$ be the set of integral curves
of the Euler-Lagrange Pfaffian system $(\mathcal{J},\omega)$. If
$y=((\Gamma,k);x_1,x_2) : I \to Y$ is in
$\mathcal{V}(\mathcal{J},\omega)$, then equations
\[
 \eta^1=\eta^2 =\cdots =\eta^5 =0
  \]
and the independence condition $\omega \neq 0$ tell us that $\Gamma$
defines a canonical frame along the null curve $\gamma = \Gamma J
\Gamma^\ast$ and that $k$ is the curvature of $\gamma$.

Next, for the smooth function $k : I \to \R$, let $k'$, $k''$ and
$k'''$ be defined by
\[
 dk = k'\omega,\quad dk' = k''\omega, \quad dk'' = k''' \omega.
    \]
Equation $\sigma_3=0$ implies
\[
 x_1 = -\frac{\varepsilon}{2}k'.
  \]
Further, equation $\sigma_1=0$ gives
\[
 x_2 = \frac{1}{4}k''-\frac{\varepsilon}{2}(k^2 -mk +2).
  \]
Finally, equation $\sigma_2=0$ yields
\begin{equation}\label{euler-lagrange-eq}
  k''' -6\varepsilon k k' +2 \varepsilon m k' =0.
   \end{equation}
This is the Euler--Lagrange equation of the extremals of
\eqref{action}. It has been computed for example in
\cite{FGL-PhysLettB}. Thus, an integral curve of the Euler--Lagrange
system projects to an extremal trajectory in $\mathbb{S}^3_1$.

Conversely, let $\gamma : I \to \mathbb{S}^3_1$ be a null curve
without flex points, $\Gamma_\gamma$ its canonical frame and
$k_\gamma$ its curvature. Define the lift $y_\gamma : I \to Y$ of
$\gamma$ to the momentum space $Y$ by
\[
  y_\gamma(t) = \left((\Gamma_\gamma, k_\gamma); -\frac{\varepsilon}{2}k_\gamma',
   \frac{1}{4}k_\gamma''-\frac{\varepsilon}{2}(k_\gamma^2 -mk +2)\right).
    \]
Then, $y_\gamma$ is an integral curve of the Euler--Lagrange system
if and only if $k_\gamma$ satisfies equation
\eqref{euler-lagrange-eq} if and only if $\gamma$ is an extremal
trajectory. Thus, the integral curves of the Euler--Lagrange system
arise as lifts of trajectories in $\mathbb{S}^3_1$.

\subsection{The Lax formulation}

Introduce the \textbf{reduced curvature}
\[
  h := \frac{\varepsilon}{2}\left(k - \frac{m}{3}\right)
   \]
and identify $Y\cong\SL(2,\C)\times \R^3$, where $\R^3$ has
coordinates $(h,h',h'')$. Then, the Pfaffian equations defining the
the Euler--Lagrange system $\mathcal{J}$ are given by
\begin{equation}
 \left\{\begin{array}{l}
  \eta^j = 0, \quad (j=1,\dots,5)\\
     dh = h'\omega,\\
     dh' = h''\omega,\\
        dh'' = 12 hh'\omega,
       \end{array}
        \right.
        \end{equation}
where $\omega \neq 0$ is the independence condition. Equation
\eqref{euler-lagrange-eq} becomes
\begin{equation}\label{potential-eq.}
  h''' - 12 h h' = 0,
   \end{equation}
\begin{equation}
  H(h) = \left(
   \begin{array}{cc}
     0&\varepsilon\\
      2\varepsilon h + \frac{m}{3} +i&0\\
       \end{array}\right)
        \end{equation}
and
\[
\begin{split}
 \mu &= -\left(\varepsilon h -\frac{m}{3}\right)\beta^2_1 -h'\beta^1_1
 +\frac{\varepsilon}{2}
  \left(h'' -4h^2 +\frac{2}{3}\varepsilon m h + \frac{2}{9} m^2 -2\right)\beta^1_2\\
   &\quad+\left(h + \frac{2}{3}\varepsilon m\right)\alpha^1_2 + \alpha^2_1.
    \end{split}
     \]
Next, define the \textit{momentum} associated with $h$, ${U}(h)
\in\sl(2,\C)$, by
\begin{equation}\label{momento}
 \left(
   \begin{array}{cc}
     ih'&
      2i\varepsilon\left(h - \varepsilon\left(\frac{m}{3} + i\right)\right)\\
       2\left(h + \frac{2\varepsilon m}{3}\right) - i\varepsilon
       \left(h'' -4h^2 +\frac{2\varepsilon m h}{3} +
        \frac{2 m^2}{9} -2\right)&
          -ih'\\
         \end{array}
          \right).
           \end{equation}

A direct computation shows that equation \eqref{potential-eq.} is
equivalent to
\[
 {U(h)}' = \left[ U(h), H(h)\right].
  \]
The above discussion yields the following result.

\begin{prop}
A map $(A;h,h',h'') : I \subset \R \to Y$ is an integral curve of
the Euler--Lagrange system $(\mathcal{J},\omega)$ if and only if
\begin{equation}\label{lax}
 \left\{
  \begin{aligned}
   & A^{-1} A' = H(h),\\
    & {U(h)}' = \left[ U(h), H(h)\right].
    \end{aligned}
    \right.
     \end{equation}
\end{prop}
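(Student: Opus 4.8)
The plan is to unwind both conditions from the explicit presentation of $(\mathcal{J},\omega)$ on $Y\cong\SL(2,\C)\times\R^3$ established in the previous lemma and the Lax computation that precedes the statement. The core observation is that the Euler–Lagrange system imposes two logically independent pieces of information on a curve $y=(A;h,h',h''):I\to Y$: the ``frame'' equations $\eta^j=0$, which constrain $A$, and the ``jet'' equations relating $h$, $h'$, $h''$ and $h'''$, which constrain the $\R^3$-factor and produce the scalar ODE \eqref{potential-eq.}. I would treat these two parts in turn and then observe that they are exactly repackaged by the two lines of \eqref{lax}.

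First I would recall that, by the discussion in Section~\ref{ss:spin-sys} and Proposition~\ref{existenceSF}, for any smooth curve $y=(A;h,h',h''):I\to Y$ the vanishing of $\eta^1,\dots,\eta^5$ along $y$ together with the independence condition $\omega\neq 0$ is equivalent to saying that $A^{-1}A'=H(k)$, where $k=\tfrac{2\varepsilon h}{1}+\tfrac{m}{3}$ is recovered from the reduced curvature; in the reduced normalization this is precisely $A^{-1}A'=H(h)$ with $H(h)$ as displayed. Indeed the forms $\eta^1=\beta^1_1$, $\eta^2=\beta^1_2$, $\eta^3=\alpha^1_1-\varepsilon\omega$, $\eta^4=\alpha^1_1$, $\eta^5=\alpha^2_1-k\omega$ vanish on $y$ exactly when the pullback of the Maurer–Cartan form to $I$ has the matrix form $H(k)\,\omega$, which after the substitution $k\mapsto 2\varepsilon h+\tfrac{m}{3}$ is $H(h)\,\omega$. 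This gives the first equation of \eqref{lax} and, conversely, any $A$ satisfying it makes all $\eta^j$ vanish.

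Next I would handle the remaining generators. Along $y$, the equations $dh=h'\omega$, $dh'=h''\omega$ are the definition of the second and third coordinates (they say $y$ is the $2$-jet lift of $h$), while $dh''=12hh'\omega$ is the genuine constraint; differentiating once more and using $\omega\neq0$ this is equivalent to $h'''-12hh'=0$, i.e.\ \eqref{potential-eq.}. Now I invoke the direct computation stated just above the proposition: with $U(h)$ defined by \eqref{momento}, equation \eqref{potential-eq.} is equivalent to $U(h)'=[U(h),H(h)]$. Hence, granting that $y$ already satisfies $\eta^j=0$ (so that $h',h''$ are the $\omega$-derivatives of $h$), the three jet equations defining $\mathcal{J}$ hold if and only if the Lax equation $U(h)'=[U(h),H(h)]$ holds. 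Combining the two parts gives the stated equivalence in both directions.

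The step I expect to carry the real weight is verifying the equivalence ``\eqref{potential-eq.} $\iff$ $U(h)'=[U(h),H(h)]$,'' since this is where the specific, somewhat elaborate form of $U(h)$ matters: one must expand the commutator $[U(h),H(h)]$ entry by entry, match it against $U(h)'$ computed by the chain rule using $dh=h'\omega$, $dh'=h''\omega$, $dh''=12hh'\omega$, and check that every off-diagonal and diagonal entry reduces either to an identity or precisely to $h'''-12hh'=0$. The bookkeeping is routine but the $m$-dependent constant terms must cancel correctly; I would organize it by noting that the diagonal entries of $U(h)$ are $\pm ih'$ and that the $(1,2)$-entry of $U(h)$ is constant in $h''$, so the only nontrivial content sits in the derivative of the $(2,1)$-entry, which is where \eqref{potential-eq.} surfaces. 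Everything else in the proof is an unwrapping of definitions already in place, so once this computational equivalence is granted the proposition follows immediately.
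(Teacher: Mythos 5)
Your argument is correct and follows essentially the same route as the paper: the proposition is stated there as a direct consequence of the preceding discussion, namely that the equations $\eta^j=0$ encode $A^{-1}A'=H(h)$ while the remaining jet equations reduce to $h'''-12hh'=0$, which the displayed ``direct computation'' identifies with the Lax equation ${U(h)}'=[U(h),H(h)]$. Your additional observation that the only entry of the commutator carrying nontrivial content is the $(2,1)$-entry (the others reducing to identities) is accurate and is exactly the bookkeeping the paper leaves implicit.
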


As a consequence, we have

\begin{cor}
The momentum map
\[
 \Phi : Y \to \sl(2,\C),\quad
    (A;h,h',h'') \mapsto A U(h) A^{-1}
     \]
is constant on integral curves of the Euler--Lagrange system.
\end{cor}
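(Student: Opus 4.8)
The plan is to differentiate $\Phi$ along an integral curve and show the derivative vanishes identically, using the Lax pair from the preceding proposition. Concretely, let $(A;h,h',h'') : I \to Y$ be an integral curve of the Euler--Lagrange system. By the previous proposition, $A^{-1}A' = H(h)$ and $U(h)' = [U(h),H(h)]$ hold on $I$. Writing $\Phi = A U(h) A^{-1}$ along the curve, I would compute
\[
 \Phi' = A' U(h) A^{-1} + A\, U(h)'\, A^{-1} - A U(h) A^{-1} A' A^{-1}.
\]
Substituting $A' = A H(h)$ and $A^{-1}A' = H(h)$, this becomes
\[
 \Phi' = A\bigl( H(h) U(h) + U(h)' - U(h) H(h)\bigr) A^{-1}
       = A\bigl( U(h)' - [U(h),H(h)]\bigr) A^{-1}.
\]
The second Lax equation forces the bracketed term to vanish, so $\Phi' \equiv 0$, i.e.\ $\Phi$ is constant along the integral curve. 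This is the whole argument.

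The only genuine content beyond this three-line computation is the bookkeeping already carried out earlier in the excerpt: that the parenthetical expression $U(h)' - [U(h),H(h)]$ really is equivalent to the scalar ODE \eqref{potential-eq.}, and hence vanishes precisely because $(A;h,h',h'')$ lies on an integral curve of $(\mathcal{J},\omega)$. Since the excerpt states ``a direct computation shows that equation \eqref{potential-eq.} is equivalent to $U(h)' = [U(h),H(h)]$'' and proves the characterization \eqref{lax} of integral curves, I may invoke both freely; there is nothing further to verify. One should perhaps remark that it suffices to check constancy on each connected integral curve, and that the formula $\Phi = AU(h)A^{-1}$ is exactly the coadjoint-type action appearing in the Hamiltonian $\SL(2,\C)$-action on $Y$, so that $\Phi$ is indeed (up to the usual identification $\sl(2,\C)\cong\sl(2,\C)^\ast$) the associated momentum map — this is what justifies the terminology in the statement, though it is not needed for the proof of constancy itself.

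There is no real obstacle here: the statement is a formal corollary of the Lax formulation, and the proof is the standard observation that conjugation of a Lax matrix by the group element integrating the Lax connection is a first integral. If I wanted to be slightly more careful I would note that $A$ is $\SL(2,\C)$-valued and smooth, so $A^{-1}$ is smooth and the product rule applies without issue, and that the identity $\frac{d}{dt}(A^{-1}) = -A^{-1}A'A^{-1}$ used above follows from differentiating $A^{-1}A = \mathrm{Id}$. I expect the entire proof to occupy at most a few lines.
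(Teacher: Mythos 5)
Your proof is correct and is precisely the argument the paper intends: the corollary is stated as an immediate consequence of the Lax formulation \eqref{lax}, and your three-line computation $\Phi' = A\bigl(U(h)' - [U(h),H(h)]\bigr)A^{-1} = 0$ is the standard verification. Nothing is missing.
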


\begin{remark}\label{moment-map}
The momentum space $Y$ may be identified with $\SL(2,\C)\times \a$,
where $\a =\span\{U(h)\}$ is an affine subspace of $\sl(2,\C)$. The
group $\SL(2,\C)$ acts on $(Y, \mu)$ by
\[
  g\cdot (A; U(h)) = (gA; U(h)), \quad \text{for each }\, g\in \SL(2,\C),\,
U(h) \in \a,
  \]
in a Hamiltonian way. Using the isomorphism of $\sl(2,\C)$ with its
dual Lie algebra induced by the Killing form, one sees that the
momentum map associated with this action is given by $\Phi$. Moreover,
if $y = (A(t), U(h)(t))$ is an integral curve of the characteristic
vector field $\zeta$, then $U(h)(t)$ is an integral curve of the
vector field
\[
  X_\zeta : U(h) \mapsto \left[ U(h), H(h)\right]
   \]
and $\zeta$ can be written
\[
 \zeta_{|y} = H(h)_{|A} + X_\zeta (U(h)),
  \]
for all $y=(A, U(h)) \in Y$. If $\a_s$ denotes the {singular set} of
$X_\zeta$, then the integral curves through $(A, U(h))\in \SL(2,\C)
\times \a_s$ are orbits of the 1-parameter subgroups generated by
$H(h)$. By \eqref{momento}, these project to curves with constant
curvature (null helices). Next, consider $\Phi : \SL(2,\C) \times
\a_r \to \sl(2,\C)$, where $\a_r$ denotes the complement of $\a_s$
in $\a$. For each regular value $\ell\in \sl(2,\C)$ of $\Phi$, the
isotropy subgroup at $\ell$, $\SL(2,\C)_\ell$, is {abelian} and
$\dim \SL(2,\C)_\ell = \text{rank}\,\SL(2,\C)_\ell = 2$. The reduced
space $Y_\ell = \Phi^{-1}(\ell)/\SL(2,\C)_\ell$ is then
1-dimensional. This implies that an integral curve $y$ with momentum
$\ell$ (i.e., $\Phi \circ y = \ell$) can be found by quadratures.
Any other integral curve with momentum $\ell$ is given by $b\cdot
y$, for some $b\in \SL(2,\C)_\ell$.

Note that when the action of the symmetry group on the momentum
space is co-isotropic (as in the present case), the equation
governing the flow of $X_\zeta$ can always be written in Lax form.
See, for instance, \cite{GM-JGP}.
\end{remark}

\section{Integration of the trajectories}\label{s:integration}

\subsection{Preparatory material}

From equation \eqref{potential-eq.}, it follows that the reduced
curvature $h$ satisfies
\begin{equation}\label{weierstrass-eq}
 ({h}')^2 = 4h^3 -g_2h -g_3,
  \end{equation}
for real constants $g_2$ and $g_3$. Hence $h$ is expressed by the
real values of either a Weierstrass $\wp$-function with invariants
$g_2$, $g_3$, or one of its degenerate forms.

\vskip0.2cm

We call a solution to \eqref{weierstrass-eq} a \textit{potential}
with analytic invariants $g_2$, $g_3$. Two potentials are considered
equivalent if they differ by a re-parametrization of the form $s
\mapsto s+c$, where $c$ is a constant.\footnote{When invariants
$g_2$ and $g_3$ are given, such that $27g_3^2 \neq g_2^3$, the
general solution of the differential equation $(\frac{dy}{dz})^2 =
4y^3 -g_2y -g_3$ can be written in the form $\wp(z
+\alpha;g_2,g_3)$, where $\alpha$ is a constant of integration.} For
real $g_2$ and $g_3$, let $\Delta(g_2,g_3) = 27g_3^2 -g_2^3$ be the
discriminant of the cubic polynomial
\[
 P(t;g_2,g_3) = 4t^3 -g_2t -g_3.
  \]
The study of the real values of the Weierstrass $\wp$-function with
real invariants $g_2$, $g_3$ (and its degenerate forms) leads to
primitive half-periods $\omega_1$, $\omega_3$ such that (see for
instance \cite{Lawden}):
\begin{itemize}

\item $\Delta(g_2,g_3)<0$: $\omega_1>0$, $\omega_3=i\nu\omega_1$, $\nu>0$.

\item $\Delta(g_2,g_3)>0$: $\omega_1>0$, $\omega_3=\frac{1}{2}(1+i\nu)\omega_1$,
$\nu>0$.

\item $\Delta(g_2,g_3)=0$ and $g_3 > 0$: $\omega_1>0$, $\omega_3 =+i\infty$.

\item $\Delta(g_2,g_3)=0$ and $g_3 < 0$: $\omega_1=+\infty$, $-i\omega_3>0$.

\item $g_2=g_3=0$: $\omega_1=+\infty$, $\omega_3=+i\infty$.

\end{itemize}
Accordingly, denoting by $\mathcal{D}(g_2,g_3)$ the fundamental
period-parallelogram spanned by $2\omega_1$ and $2\omega_3$, the
only possible cases for the potential function $h : I \to \R$ are:

\begin{itemize}
\item $\Delta < 0$:
$h(s) = \wp(s;g_2,g_3)$, $I=(0,2\omega_1)$.

\item $\Delta < 0$:
$h(s) = \wp_3(s;g_2,g_3) = \wp (s + \omega_3;g_2,g_3)$, $I=\R$.

\item $\Delta > 0$:
$h(s) = \wp(s;g_2,g_3)$, $I=(0,2\omega_1)$.

\item $\Delta =0$, $g_3 = -8a^3 > 0$:
\[
 h(s) = - 3a\tan^2{\left({\sqrt{-3a}}{s}\right)} -2a,
\quad I=(-\frac{\pi}{\sqrt{-12a}},\frac{\pi}{\sqrt{-12a}}).
     \]
\item $\Delta =0$, $g_3 = -8a^3 < 0$:
\[
 h(s) = 3a\tanh^2{\left({\sqrt{3a}}{s}\right)} -2a,
 \quad I=\R.
     \]

\item $g_2=g_3 =0$:
$h(s) = {s^{-2}}$, $I=(-\infty,0)$ or $I=(0,+\infty)$.

\end{itemize}

\vskip0.2cm

Let $h$ be a Weierstrass potential with real invariants $g_2$,
$g_3$, and $U(h)$ the corresponding momentum as given by
\eqref{momento}. Then
\begin{eqnarray*}
 \det U(h) &=&
  \left(\frac{4}{27}m^3 -4 m - \frac{m}{3}g_2
   -\varepsilon g_3\right) +i\varepsilon\left(\frac{4}{3} m^2
    - g_2 -4\right)\\
 &=& P\left(\varepsilon\left(\frac{m}{3} + i\right); g_2,g_3\right).
     \end{eqnarray*}
Let
\[
 \nu(m,h) : = \sqrt{P\left(\varepsilon\left(\frac{m}{3}
  + i\right); g_2,g_3\right)},
  \]
chosen once for all. Then $\pm\nu(m,h)$ are the eigenvalues of the
momentum $U(h)$.

Next, define
\begin{equation}\label{third-kind-ints}
\phi(m,h) :=
\begin{cases}
 \displaystyle\int{\frac{\nu(m,h)}{h - \varepsilon\left(\frac{m}{3} + i\right)}ds},
  \quad \nu(m,h)\neq 0,\\
   \displaystyle\int{\frac{1}{h - \varepsilon\left(\frac{m}{3} + i\right)}ds},
    \quad \nu(m,h)= 0.
     \end{cases}
       \end{equation}
These are elliptic integrals of the third kind. Let $w(m,h)$ be the
unique point in the period-parallelogram $\mathcal{D}(g_2,g_3)$ such
that
\[
  h(w) = \varepsilon\left(\frac{m}{3} + i\right) \quad\text{and}
  \quad   h'(w) = \nu(m,h).
   \]
Denote by $\sigma_h$ and $\zeta_h$, respectively, the sigma and zeta
Weierstrassian functions corresponding to the potential $h$, i.e.,
the unique analytic odd functions whose meromorphic extensions
satisfy $\zeta'_h = -h$ and $\sigma_h'/\sigma_h = \zeta_h$. Under
the above assumptions, we now compute the elliptic integrals
\eqref{third-kind-ints}. Three cases are considered.

\vskip0.2cm \noindent \textbf{Case I:} $\nu(m,h) \neq 0$. In this
case,
\[
 \phi(m,h) = \int{\frac{h'(w)}{h(s) - h(w)}ds}   =  \log{\frac{\sigma_h(s-w)}{\sigma_h(s+w)}} +
   2s\zeta_h(w) + \text{const}.
    \]

\vskip0.2cm \noindent \textbf{Case II:} $\nu(m,h) = 0$ and $g_2^2
+g_3^2 \neq 0$. In this case, $h(w) = \varepsilon\left(\frac{m}{3} +
i\right)$ is a root of the cubic polynomial $P$, say $e_3$. If $e_1,
e_2$ denote the other two roots, we have
\begin{eqnarray*}
 \phi(m,h) &=& \int{\frac{ds}{h(s) - e_3}} =\int{\frac{h(s+w)-e_3}{(e_3-e_1)(e_3-e_2)}ds}\\
 &=&\frac{1}{\displaystyle\frac{g_2}{4} -3\left(\frac{m}{3} + i\right)^2
  }\left\{\zeta_h(s + w) +
   \varepsilon\left(\frac{m}{3} + i\right)s\right\} + \text{const}.
    \end{eqnarray*}

\vskip0.2cm \noindent \textbf{Case III:} $\nu(m,h) = 0$ and
$g_2=g_3= 0$. In this case,
\[
 \phi(m,h) =\frac{1}{3}s^3 + \text{const}.
   \]

\subsection{Explicit integration}

We are now in a position to explicitly integrate the extremal
trajectories. This amounts to integrate by quadratures the reduced
system associated to the Hamiltonian action of $\SL(2,\C)$ on $Y$
(cf. Remark \ref{moment-map}). The key to explicit integration is
the conservation of the momentum map along integral curves of the
Euler-Lagrange system.

\begin{thm}\label{main}
Let $\gamma : I \to \mathbb{S}^3_1$ be an extremal trajectory with Lagrange
multiplier $m$
and reduced curvature $h$ with real invariants $g_2$, $g_3$. Let
$U_\gamma(h)$ be the momentum of $h$ given by \eqref{momento}, and
assume that $\gamma$ be parametrized by the canonical parameter $s$,
i.e., $\omega = ds$. According as $\det U_\gamma(h)$ is zero, or
different from zero, we distinguish two cases.

\vskip0.2cm

\noindent \textbf{Case I:} If $\det U(h) \neq 0$, then the canonical
frame field $\Gamma : I \to \SL(2,\C)$ along $\gamma$ is given by
 \[
  \Gamma(s) = A\cdot M(s),
   \]
where $A\in \SL(2,\C)$ and $M(s)$ takes the form
\[
\frac{1}{\sqrt{-4i\varepsilon \nu}} \left(
     \begin{array}{cc}
     \displaystyle e^{\phi(m,h)}&
       \displaystyle 0\\
       \displaystyle 0  &
       \displaystyle e^{-\phi(m,h)}
          \end{array}\right)
\left(
 \begin{array}{cc}
\displaystyle \frac{ih'+\nu}{\sqrt{h - \varepsilon\left(\frac{m}{3}
+ i\right)}}&
 2i\varepsilon{\sqrt{h - \varepsilon\left(\frac{m}{3} + i\right)}}\\
\displaystyle \frac{-ih'+\nu}{\sqrt{h - \varepsilon\left(\frac{m}{3}
+ i\right)}}
 &
\displaystyle -2i\varepsilon{\sqrt{h - \varepsilon\left(\frac{m}{3}
+ i\right)}}
          \end{array}\right)
\]

\vskip0.2cm

\noindent \textbf{Case II:} If $\det U(h) = 0$, then the canonical
frame field $\Gamma : I \to \SL(2,\C)$ along $\gamma$ is given by
 \[
  \Gamma(s) = A\cdot M(s),
   \]
where $A\in \SL(2,\C)$ and $M(s)$ takes the form
\[
\frac{1}{\sqrt{-2i\varepsilon}} \left(
     \begin{array}{cc}
 \frac{1}{\sqrt{h - \varepsilon (\frac{m}{3} + i)}}&
 \frac{-\phi(m,h)}{2i}\\
        0  &
        1
          \end{array}\right)
\left(
 \begin{array}{cc}
  \displaystyle 1 &
   0\\
    \frac{-ih'}{\sqrt{h - \varepsilon (\frac{m}{3} + i)}} &
  -2i\varepsilon\sqrt{h - \varepsilon (\frac{m}{3} + i)}\\
          \end{array}\right)
\]
\end{thm}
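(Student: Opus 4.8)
The plan is to integrate the Lax pair \eqref{lax} explicitly, exploiting the conservation of the momentum map $\Phi$ along integral curves of the Euler--Lagrange system. By the Corollary, along an extremal we have $AU(h)A^{-1} = \ell$, a constant element of $\sl(2,\C)$; equivalently $\Gamma = A\cdot M$ where $M$ is the particular solution of $M^{-1}M' = H(h)$ with $M(s_0) = I$, so it suffices to produce $M$ explicitly. The strategy is to diagonalize (or put in Jordan form) the momentum $U(h)$ by a gauge transformation $M = P\cdot L$, chosen so that $L$ satisfies a triangular, hence integrable, linear system. Since $P U(h) P^{-1}$ is to be constant and $U(h)$ has eigenvalues $\pm\nu(m,h)$, the off-diagonal entry $h - \varepsilon(\frac{m}{3}+i)$ of $U(h)$ (up to the factor $2i\varepsilon$) dictates the natural choice of $P$: its columns are eigenvectors of $U(h)$, normalized by the square root $\sqrt{h-\varepsilon(\frac{m}{3}+i)}$ so that $\det P$ is constant. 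This is exactly the second matrix factor appearing in the statement.

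First I would split according to whether $\det U(h) = -\nu^2$ vanishes. In Case I ($\nu \neq 0$), set $P(s)$ equal to the matrix whose columns are the two eigenvectors of $U(h)$ for eigenvalues $\pm\nu$, written with the normalizing factor $1/\sqrt{-4i\varepsilon\nu}$ to make $\det P \equiv 1$; one checks $P^{-1}U(h)P = \mathrm{diag}(\nu,-\nu)$ by direct computation using \eqref{momento}. Then writing $M = P L$ and substituting into $M^{-1}M' = H(h)$ gives $L^{-1}L' = P^{-1}H(h)P - P^{-1}P'$. The key computation is to verify that the right-hand side is diagonal: this follows because $P$ conjugates $U(h)$ to a constant diagonal matrix and $U(h)' = [U(h),H(h)]$, which forces the gauge transformation of $H(h)$ to commute with $\mathrm{diag}(\nu,-\nu)$ up to the correction term $-P^{-1}P'$, leaving only a scalar diagonal term. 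That scalar is precisely $\nu/(h-\varepsilon(\frac{m}{3}+i))$, whose integral is $\phi(m,h)$ by \eqref{third-kind-ints}; hence $L = \mathrm{diag}(e^{\phi},e^{-\phi})$, and $M = P\,\mathrm{diag}(e^{\phi},e^{-\phi})$ matches the stated formula (after reordering the two matrix factors, which is just the choice of putting the exponential on the left).

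In Case II ($\nu = 0$, $\det U(h) = 0$), the momentum $U(h)$ is nilpotent rather than diagonalizable, so I would instead choose $P(s)$ to be the upper-triangular matrix with diagonal entries $1/\sqrt{h-\varepsilon(\frac{m}{3}+i)}$ and $1$ (times the normalizing $1/\sqrt{-2i\varepsilon}$ making $\det P$ constant) that conjugates $U(h)$ into its Jordan normal form $\left(\begin{smallmatrix}0&*\\0&0\end{smallmatrix}\right)$ with constant off-diagonal entry. Repeating the gauge computation $L^{-1}L' = P^{-1}H(h)P - P^{-1}P'$ now yields a strictly lower-triangular $L^{-1}L'$ whose single nonzero entry integrates to $\phi(m,h)$ in its degenerate form \eqref{third-kind-ints}; this gives the lower-triangular $L$ and hence the stated $M(s)$. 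In both cases the final step is to observe that $A := \Gamma(s_0)M(s_0)^{-1}$ is a constant element of $\SL(2,\C)$ and that $\Gamma = AM$, with $\Gamma$ automatically valued in $\SL(2,\C)$ since $\det M \equiv 1$ by the normalizations; uniqueness up to left translation is Remark following \eqref{linear-system}.

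The main obstacle is bookkeeping rather than conceptual: one must verify carefully that the chosen $P$ conjugates $U(h)$ to a \emph{constant} matrix, that $\det P$ is indeed constant (which pins down the normalizing radical factors $\sqrt{-4i\varepsilon\nu}$ and $\sqrt{-2i\varepsilon}$), and that the residual diagonal (resp. off-diagonal) term in $P^{-1}H(h)P - P^{-1}P'$ collapses to exactly the integrand of $\phi(m,h)$. Each of these is a finite $2\times 2$ computation using \eqref{momento}, \eqref{potential-eq.} and $({h}')^2 = 4h^3 - g_2 h - g_3$; the Weierstrass relation is what allows the square roots of $h'$-type expressions to be handled consistently. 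The branch choices for the various square roots must be fixed once and for all, as already flagged in the definition of $\nu(m,h)$, so that $M(s)$ is a well-defined $\SL(2,\C)$-valued function on $I$.
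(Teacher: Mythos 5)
Your proposal is correct and follows essentially the same route as the paper: both exploit the conservation of the momentum map to diagonalize (resp.\ put in Jordan form) $U(h)$ along the curve, reducing the linear system $\Gamma^{-1}\Gamma'=H(h)$ to a diagonal (resp.\ triangular) one whose entries integrate to the third-kind elliptic integral $\phi(m,h)$; the paper merely phrases this column-by-column, showing that suitably rescaled eigenvector fields of the constant matrix $\Gamma U(h)\Gamma^{-1}$ are constant vectors $\Lambda_1,\Lambda_2$ and then solving $\Gamma\, R\, S=\Lambda$ for $\Gamma$. The only point to fix is your factor ordering: with the paper's convention $\Gamma'=\Gamma H$ the correct gauge is $M=L\,P^{-1}$ (diagonal exponential on the left, inverse eigenvector matrix on the right), so the gauged connection is $P^{-1}HP+P^{-1}P'$ rather than $P^{-1}HP-P^{-1}P'$ -- a bookkeeping slip, not a gap, since non-commuting factors cannot simply be "reordered."
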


\begin{proof}[Proof of Case I]
Let $\Gamma = (C_1,C_2) : I \to \SL(2,\C)$ be a canonical frame
along $\gamma$ and $U_\gamma(h)$ be the momentum of $\gamma$ given
by \eqref{momento}. Consider the eigenvalues $\pm \nu(m,h)$ of
$U_\gamma(h)$ and denote by $\mathbf{L}_{\pm}$ the corresponding
eigenspaces. From the definition of $U_\gamma(h)$, it follows that
\[
\begin{aligned}
 L_+ &= -2i\varepsilon\left(h - \varepsilon\left(\frac{m}{3} + i\right)\right)C_1
    + \left(ih' -\nu(m,h)\right)C_2 : I \to \mathbf{L}_+ \\
  L_- &= -2i\varepsilon\left(h - \varepsilon\left(\frac{m}{3} + i\right)\right)C_1
    + (ih' +\nu(m,h))C_2 : I \to \mathbf{L}_-
   \end{aligned}
    \]
are eigenvectors of $U_\gamma(h)$ corresponding to $\nu(m,h)$ and
$-\nu(m,h)$, respectively. Thus, we must have
\[
 L'_+ = \rho_1 L_+, \quad L'_-= \rho_2 L_-,
  \]
for analytic functions $\rho_1$, $\rho_2$. Using the Maurer--Cartan
equation $\Gamma' =\Gamma H(m,h)$, we compute
\[
  L'_+ = \frac{h' +
          \nu(m,h)}{2\left(h -
           \varepsilon\left(\frac{m}{3} + i\right) \right)} L_+,
    \quad
       L'_-= \frac{h' -
              \nu(m,h)}{2\left(h -
               \varepsilon\left(\frac{m}{3} + i\right) \right)} L_-.
        \]
We thus see that the two vectors
\[
 \begin{aligned}
 \Lambda_1 & := \exp{\left(-\int{\frac{h' + \nu(m,h)}{2\left(h -
           \varepsilon\left(\frac{m}{3} + i\right) \right)}ds}\right)}L_+
           \\
 \Lambda_2 & := \exp{\left(-\int{\frac{h' -
          \nu(m,h)}{2\left(h -
           \varepsilon\left(\frac{m}{3} + i\right) \right)}ds}\right)}L_-
            \\
  \end{aligned}
   \]
are constant along $\gamma$. By \eqref{third-kind-ints}, they become
\[
\Lambda_1 =
 \frac{\exp{\left(-\phi(m,h)\right)}}{\sqrt{h -\varepsilon\left(\frac{m}{3} + i\right)}}L_+,
            \qquad
 \Lambda_2 =
\frac{\exp{\left(\phi(m,h)\right)}}{\sqrt{h
-\varepsilon\left(\frac{m}{3} + i\right)}}L_-.
   \]
Hence
\[
  \Gamma\cdot R(m,h)\cdot S(m,h) = \Lambda = (\Lambda_1, \Lambda_2) \quad
   \]
where
\[
 R(m,h) =
     \left(
     \begin{array}{cc}
     \displaystyle -2i\varepsilon\left(h - \varepsilon\left(\frac{m}{3} + i\right)\right)&
       \displaystyle-2i\varepsilon\left(h - \varepsilon\left(\frac{m}{3} + i\right)\right)\\
       \displaystyle ih'-\nu(m,h)  &
         \displaystyle ih'+\nu(m,h)
          \end{array}\right).
            \]
and
\[
S(m,h) =\left(
     \begin{array}{cc}
      \frac{1}{\sqrt{h - \varepsilon (\frac{m}{3} + i)}}&
        0\\
        0  &
       \frac{1}{\sqrt{h - \varepsilon (\frac{m}{3} + i)}}
          \end{array}\right)
\left(
     \begin{array}{cc}
      \exp{(-\phi(m,h))}&
        0\\
        0  &
        \exp{(\phi(m,h))}
          \end{array}\right),
         \]
From this, we obtain
\[
\Gamma\left(
 \begin{array}{cc}
  -2i\varepsilon{\sqrt{h - \varepsilon (\frac{m}{3} + i)}}&
   -2i\varepsilon{\sqrt{h - \varepsilon (\frac{m}{3} + i )}}\\
   \frac{ih'-\nu}{\sqrt{h - \varepsilon (\frac{m}{3} + i)}} &
    \frac{ih'+\nu}{\sqrt{h - \varepsilon (\frac{m}{3} + i )}}\\
          \end{array}\right)
\left(
     \begin{array}{cc}
     \displaystyle e^{-\phi(m,h)}&
       \displaystyle 0\\
       \displaystyle 0  &
       \displaystyle e^{\phi(m,h)}
          \end{array}\right) = \Lambda
       \]
and hence
\[
\Gamma = \tilde{\Lambda} \left(
     \begin{array}{cc}
     \displaystyle e^{\phi(m,h)}&
       \displaystyle 0\\
       \displaystyle 0  &
       \displaystyle e^{-\phi(m,h)}
          \end{array}\right)
\left(
 \begin{array}{cc}
 \displaystyle \frac{ih'+\nu}{\sqrt{h - \varepsilon\left(\frac{m}{3} + i\right)}}&
  \displaystyle 2i\varepsilon{\sqrt{h - \varepsilon\left(\frac{m}{3} + i\right)}}\\
  \displaystyle \frac{-ih'+\nu}{\sqrt{h - \varepsilon\left(\frac{m}{3} + i\right)}}
   &
  \displaystyle -2i\varepsilon{\sqrt{h - \varepsilon\left(\frac{m}{3} + i\right)}}
          \end{array}\right).
    \]
\end{proof}

\begin{proof}[Proof of Case II]
Again, let $\Gamma = (C_1,C_2) : I \to \SL(2,\C)$ be a canonical
frame along $\gamma$ and $U_\gamma(h)$ be the momentum of $\gamma$.
If $\nu(m,h) =0$, then
\[
 L_1 = -2i\varepsilon\left(h - \varepsilon\left(\frac{m}{3} + i\right)\right)C_1
    + ih' C_2  \\
    \]
belongs to the kernel of $U_\gamma(h)$, and proceeding as in Case I,
we see that the vector
\begin{equation}\label{first-int-1}
 \Lambda_1 =
  \frac{{1}}{\sqrt{h -\varepsilon\left(\frac{m}{3} + i\right)}}L_1
    \end{equation}
is a first integral. In order to find another first integral, we
look for analytic functions $f$ and $g$ such that
\begin{equation}\label{first-int-2}
 \Lambda_2 := gC_2 + fL_1
  \end{equation}
be a constant vector. Differentiating and using the Maurer--Cartan
equation $\Gamma' = \Gamma H(m,h)$, we obtain
\[
 g'C_2 + g\varepsilon C_1 + f' L_1 = 0,
 \]
from which we compute
\[
 g = \frac{{1}}{\sqrt{h -\varepsilon\left(\frac{m}{3} + i\right)}}, \quad
  f = \frac{1}{2i}\int{\frac{ds}{h - \varepsilon\left(\frac{m}{3} + i\right)}}=
  \frac{1}{2i}\phi(m,h).
   \]
Now, from \eqref{first-int-1} and \eqref{first-int-2}, we obtain
\[
\Gamma\left(
 \begin{array}{cc}
  \displaystyle-2i\varepsilon\sqrt{h - \varepsilon (\frac{m}{3} + i)}&
   0\\
   \displaystyle\frac{ih'}{\sqrt{h - \varepsilon (\frac{m}{3} + i)}} & 1\\
          \end{array}\right)
\left(
     \begin{array}{cc}
       1&  \displaystyle\frac{1}{2i}\phi(m,h)\\
       \displaystyle 0  &
       \displaystyle \frac{1}{\sqrt{h - \varepsilon\left(\frac{m}{3} + i\right)}}
          \end{array}\right) = \Lambda =(\Lambda_1, \Lambda_2),
       \]
and hence the required result.

\end{proof}

\bibliographystyle{amsalpha}

\begin{thebibliography}{AA}

\bibitem{Br}
R. L. Bryant, {On notions of equivalence of variational problems
with one independent variable}, \textit{Contemp. Math.} \textbf{68}
(1987), 65--76.

\bibitem{BG}
 R. L. Bryant, P. A. Griffiths,
{Reduction for constrained variational problems and
$\int{{\frac{k^2}{2}}}$}, \textit{Amer. J. Math.} \textbf{108}
(1986), 525--570.

\bibitem{C1}
 E. Cartan,
\textit{Sur un probl\`eme du Calcul des variations en G\'eom\'etrie
projective plane}, Oeuvres Compl\`etes, Partie III, vol. 2,
1105--1119, Gauthier--Villars, Paris, 1955.

\bibitem{CartanLSLII}
 E. Cartan,
\textit{Le\c{c}ons sur les invariants int\'egraux}, Hermann, Paris,
1922.

\bibitem{Cas}
M. Castagnino, Sulle formule di Frenet-Serret per le curve nulle di
una $V\sb{4}$\ riemanniana a metrica iperbolica normale,
\textit{Rend. Mat. e Appl.} (5) \textbf{23} (1964), 438--461.

\bibitem{FGL-IJMPA}
A. Ferr\'andez, A. Gim\'enez, P. Lucas, Null helices in Lorentzian
space forms, \textit{Internat. J. Modern Phys. A} \textbf{16}
(2001), no. 30, 4845--4863.

\bibitem{FGL-PhysLettB}
A. Ferr\'andez, A. Gim\'enez, P. Lucas, Geometrical particle models
on 3D null curves, \textit{Phys. Lett. B} \textbf{543} (2002),
311--317; {\tt arXiv:hep-th/0205284.}

\bibitem{Gardner1983}
R. G. Gardner, Differential geometric methods interfacing control
theory; in \textit{Differential geometric control theory} (Houghton,
MI, 1982), R. W. Brockett, R. S. Millman and H. J. Sussmann (eds.),
117--180, Progr. Math., 27, Birkh\"auser, Boston, 1983.

\bibitem{Gardner1989}
R. G. Gardner, The method of equivalence and its applications,
CBMS-NSF Regional Conference Series in Applied Mathematics, 58,
SIAM, Philadelphia, 1989.

\bibitem{GM-JGP}
J. D. E. Grant, E. Musso, Coisotropic variational problems,
\textit{J. Geom. Phys.} \textbf{50} (2004), 303--338; {\tt
arXiv:math.DG/0307216.}

\bibitem{Gr}
 P. A. Griffiths,
\textit{Exterior differential systems and the calculus of
variations}, Progr. Math., 25, Birkh\"auser, Boston, 1982.

\bibitem{Hsu}
  L. Hsu,
{Calculus of variations via the Griffiths formalism}, \textit{J.
Differential Geom.} \textbf{36} (1992), 551--589.

\bibitem{LS}
 J. Langer, D. Singer,
{Liouville integrability of geometric variational problems},
\textit{Comment. Math. Helv.} \textbf{69} (1994), 272--280.

\bibitem{Lawden}
D. F. Lawden, \textit{Elliptic functions and applications}, Applied
Mathematical Sciences, 80, Springer-Verlag, New York, 1989.

\bibitem{MN-Forum}
E. Musso, L. Nicolodi, Reduction for the projective arclength
functional, \textit{Forum Math.} \textbf{17} (2005), 569-590.

\bibitem{MN-adS}
E. Musso, L. Nicolodi, Closed trajectories of a particle model on
null curves in anti-de Sitter 3-space, \textit{Classical Quantum
Gravity} (to appear); {\tt arXiv:0709.2017 [math.DG].}

\bibitem{NMMK-NucPhysB}
A. Nersessian, R. Manvelyan, H. J. W. M\"uller-Kirsten, Particle
with torsion on 3d null-curves, \textit{Nuclear Phys. B} \textbf{88}
(2000), 381--384; {\tt arXiv:hep-th/9912061.}

\bibitem{NR-PhysLettB}
A. Nersessian, E. Ramos, Massive spinning particles and the geometry
of null curves, \textit{Phys. Lett. B} \textbf{445} (1998),
123--128; {\tt arXiv:hep-th/9807143.}

\bibitem{P-NucPhysB}
M. S. Plyushchay, The model of the relativistic particle with
torsion, \textit{Nucl. Phys. B} \textbf{362} (1991), 54--72.

\bibitem{Pi}
E. Picard, {Sur les \'equations diff\'erentielles lin\'eaires \`a
coefficients dou\-ble\-ment p\'eriodiques}, \textit{J. Reine Angew.
Math.} \textbf{90} (1881), 281--302.


\end{thebibliography}

\end{document}